\newtheorem{theorem}{Theorem}[section]
\newtheorem*{main}{Main Theorem}
\newtheorem{lemma}[theorem]{Lemma}
\theoremstyle{definition}
\newcommand\cC{{\mathcal{C}}}
\newcommand\cJ{{\mathcal J}}
\newcommand\cL{{\mathcal L}}
\newcommand\cP{{\mathcal P}}
\newcommand\bC{{\mathbb C}}
\newcommand\bN{{\mathbb N}}
\newcommand\bR{{\mathbb R}}
\newcommand\bZ{{\mathbb Z}}
\newcommand{\Oo}{\Omega} 
\newcommand{\D}{D} 
\newcommand{\m}{\operatorname{\mathbf{m}}} 
\newcommand{\Lspace}[2]{\mathbf{L^{#1}}(#2)} 
\newcommand{\indicator}{\mathbf{1}} 
\newcommand{\TotalVar}[1]{\abs{ #1 }\!(\Oo)}   
\newcommand{\TotVar}[2]{\abs{ #1 }\!(#2)}   
\newcommand{\bvnorm}[1]{{\lVert{#1}\rVert}_{\mathbf{BV}(\Oo)}}
\newcommand{\norm}[1]{\left\lVert{#1}\right\rVert}
\newcommand{\abs}[1]{\left\lvert{#1}\right\rvert}
\newcommand{\LL}[3]{{\lVert{#2}\rVert}_{\mathbf{L^{#1}}(#3)}}
\newcommand{\gbv}[2]{\mathfrak{B}_{#1}(#2)}
\newcommand{\map}{f}
\newcommand{\BV}{\mathbf{BV}}
\newcommand{\cZ}{\mathcal{Z}}
\newcommand{\dmeasures}[1]{\mathfrak{D}(#1)}
\newcommand{\measures}{\mathfrak{M}(\Oo)}
\title[Generalised BV  and Expanding Interval Maps]{An Alternative Approach to Generalised BV  and the Application to  Expanding Interval Maps}
\author[Oliver Butterley]{Oliver Butterley}
\subjclass[2000]{Primary:  37D50;   Secondary:   37A05, 37E05, }
\keywords{Generalised bounded variation, transfer operator, expanding map, spectral gap}
\email{oliver.butterley@gmail.com}
\thanks{It is a pleasure to thank Carlangelo Liverani for many helpful discussions and comments. Research partially supported by the ERC Advanced Grant MALADY (246953). I am indebted  to Stefano Luzzatto for invaluable assistance during a period of many years.  I am grateful  to the library at ICTP where much of this work was done.}
\begin{document}
\maketitle

\begin{abstract}
We introduce a family of Banach spaces of measures, each containing the set of measures with density of bounded variation. These spaces are suitable for the study of weighted transfer operators of piecewise-smooth maps of the interval where the weighting used in the transfer operator is not better than piecewise H\"older continuous and the partition on which the map is continuous may possess a countable number of  elements. 
For such weighted transfer operators we give upper bounds for both the spectral radius and for the essential spectral radius.
\end{abstract}

\section{Introduction}\label{sec:intro}
An established and fruitful approach to the study of piecewise-smooth expanding  maps of the interval is to consider the push forward associated to the map which is a linear operator acting on the space of measures. Considered as a linear operator acting on densities the push forward is  called the transfer operator.
This operator has been studied acting on the space of functions of bounded variation  in great generality and has been shown to be quasi-compact from which many statistical properties follow by standard methods (see  \cite{Babook} and references within). 
Studying the transfer operator acting on the space of functions of  bounded variation requires that the inverse of the derivative of the map is a function of bounded variation. When less regularity exists a different space must be considered as the domain of the transfer operator. 
A question that arises naturally in dynamical systems, for example in the expanding map associated with Lorenz flows  \cite{LuzzMelPac2005}, is when the inverse of the derivative is only H\"older continuous or indeed piecewise H\"older continuous. Keller \cite{keller1985generalized} introduced function spaces which he called \emph{generalised bounded variation}. 
Acting on such spaces he  showed that the transfer operator associated to a piecewise expanding map is quasi-compact in the case where the map has  finite discontinuities and where the inverse of the derivative is H\"older continuous.

It would be desirable to extend the result of Keller in several different directions. We would like to deal with the case when there are a countable number of points of discontinuity and also study weighted transfer operators for some wide class of weights. 
An example of an application for both these possible extensions is seen when one is interested in studying flows. Commonly when studying a flow, or indeed a semiflow, one may consider some Poincar\'e section and so represent the flow as the combination of \emph{return map} and \emph{return time function} defined on that Poincar\'e section. Such a flow is called a suspension flow.  In certain situations one has the option to consider either a Poincar\'e section consisting of a finite number of connected components but with unbounded return time or alternatively choose a Poincar\'e section with a countable number of connected components but with the benefit that the return time is bounded. There is also an application for weighted transfer operators in the study of suspension flows. The knowledge of the spectral properties of weighted transfer operators gives information concerning the mixing rates of suspension flows via the use of twisted transfer operators (see for example \cite{Po}).

The space of generalised bounded variation of Keller  \cite{keller1985generalized} is based on the \emph{oscillation function} which measures the behaviour of the function close to a particular point and which are then integrated over the space to give the norm.
In contrast Thomine  \cite{MR2784627} uses Sobolev spaces with fractional order and recovers the results in the case where the derivative of the map is bounded and, as before, when the number of discontinuities are finite.
In the present work we take yet another  different approach in defining the Banach space which allows for significant simplifications of the calculations whilst being almost entirely self contained.
The result we obtain successfully extends the result of Keller \cite{keller1985generalized}  to the weighted transfer operators and to  the setting where there may be a countable number of discontinuities for the map and the weighting. We achieve this with only the rather weak additional assumption that there is an $\mathbf{L^{p}}$ bound on the derivative of the map.

\section{Generalised Bounded Variation}\label{sec:GBV}
Let $\Oo$ be the open unit interval. 
Let $\measures$ denote the set of complex measures on $\Oo$. 
This  is a Banach space with respect to the total variation norm $\TotalVar{\mu} = \sup\{ \abs{\mu(\eta)}: \eta \in \cC(\Oo), \abs{\eta}_{\infty} \leq 1  \}$ 
where $\cC(\Oo)$ denotes the set of complex-valued continuous functions with support contained within $\Oo$. 
We say that a measure  $\mu \in \measures$ is  differentiable (in the sense of measures) if there exists some measure  $D\mu \in \measures$ such that $\D\mu(\eta) = -\mu(\eta')$ for all $\eta\in \cC^{1}(\Oo)$.
Let $\dmeasures{\Oo}$ denote the set of all  differentiable measures. 
The set $\dmeasures{\Oo}$ is a Banach space when equipped with the norm $\bvnorm{\mu}:= \TotalVar{D\mu} +\TotalVar{\mu} $. 
This set corresponds to the set of measures with densities of {bounded variation}.
For any $\beta\in [0,1]$ and $\mu\in \measures$ let
\[
 \norm{\mu}_{\gbv{\beta}{\Oo}} := \inf \left\{ \sup_{k>0} \left(  k^{-\beta} \TotalVar{\mu_{k}-\mu} + k^{1-\beta} \norm{\mu_{k}}_{\BV(\Oo)} \right) \right\},
\]
where the infimum is taken over all 
{families of measures ${\{ \mu_{k} \}}_{k>0}\subset \dmeasures{\Oo}  $.}
We have  the understanding that this quantity may or may not be finite.
 {Note that the  families ${\{ \mu_{k} \}}_{k>0}$ are  parametrised by $k\in (0,\infty)$.  To understand the meaning of this observe that, for suitable approximating families,  $\mu_{k} \to \mu$ in $\TotalVar{\cdot}$ as $k\to0$ and $\norm{\mu_{k}}_{\BV(\Oo)}\to 0$ as $k\to \infty$.\footnote{There are several obvious alternatives to the definition of this norm, for example considering the analog but  changing to the set $k\in \{2^{n}: n\in \bZ\}$. See  \cite{Bergh:1976fk} for the discussion of the equivalence of such possibilities.}}
  We let $\gbv{\beta}{\Oo} := \{ \mu\in \measures : \norm{\mu}_{\gbv{\beta}{\Oo}}<\infty \}$. 
The quantity $ \norm{\cdot}_{\gbv{\beta}{\Oo}} $ has the required properties of  a seminorm on $\gbv{\beta}{\Oo} $.\footnote{The spaces defined here are equivalent to  the interpolation spaces defined using the real interpolation method \cite[Chapter 3]{Bergh:1976fk}. However in this article we take the point of view of not relying on any abstract theory and instead work explicitly and give  a self-contained account using the minimum required for the task at hand.}
   Note that the definition of $\norm{\cdot}_{\gbv{\beta}{\Oo}}$ 
   means that 
   \[
   \TotalVar{\mu} \leq \norm{\mu}_{\gbv{\beta}{\Oo}}
   \]
    (just write $\TotalVar{\mu} \leq \TotalVar{\mu-\mu_{k}} + \TotalVar{\mu_{k}}$ and take $k=1$) and so the newly defined quantity is a norm.
   Clearly $\gbv{\beta}{\Oo}$ is a non-empty set which at least contains all the measures with density  of bounded variation.
The case $\beta=1$ corresponds to the set of measures with densities of bounded variation (choose $\mu_{k}=\mu$ for all $k>0$) and the case $\beta=0$ corresponds to the set of all complex measures (choose $\mu_{k}=0$ for all $k>0$). 
Note that $\gbv{\beta}{\Oo}$ is a vector subspace of $\measures$. Furthermore the norm $\norm{\cdot}_{\gbv{\beta}{\Oo}}$ is lower semicontinuous: Whenever ${\{\mu_{n}\}}_{n=1}^{\infty} \subset \gbv{\beta}{\Oo}$ and $\TotalVar{\mu_{n}-\mu} \to 0$ as $n\to \infty$ then $\norm{\mu}_{\gbv{\beta}{\Oo}} \leq \liminf_{n\to \infty} \norm{\mu_{n}}_{\gbv{\beta}{\Oo}}$. This means that $(\gbv{\beta}{\Oo}, \norm{\cdot}_{\gbv{\beta}{\Oo}})$ is a Banach space. Also note that $ \gbv{\beta'}{\Oo}   \subset \gbv{\beta}{\Oo} $ whenever $\beta' <\beta$. 
These spaces are convenient for several reasons. In particular that for all $\beta \in (0,1]$ the embedding 
\begin{equation}\label{eq:compact}
\gbv{\beta}{\Oo} \hookrightarrow \measures \quad \text{ is compact}.
\end{equation}
This is a property which is inherited from the compact embedding of $\gbv{1}{\Oo} = \dmeasures{\Oo}$ into $\measures$.   The proof of  \eqref{eq:compact} 
  is not lengthy and, for the sake of completeness, is included in Section~\ref{sec:basicfacts}.
Additionally  $\gbv{\beta}{\Oo}$ is an exact interpolation space \cite{Bergh:1976fk} although we will not use this directly in this article.
This means that if  $\cL : \measures \to \measures$ is both continuous as an operator from $\measures$ to itself and  also from $ \dmeasures{\Oo}$ to itself, then $\cL :\gbv{\beta}{\Oo} \to \gbv{\beta}{\Oo}  $ is continuous  and furthermore\footnote{We emphasise  the space on which the operator is being considered: By $\smash{\norm{\cL}_{\measures}}   $ we  mean the operator norm of  $\cL :\measures \to \measures$ and similarly for $ \gbv{\beta}{\Oo}$ and $\BV(\Oo)$. }  
\begin{equation}\label{eq:exact}
 \norm{\cL}_{\gbv{\beta}{\Oo}} \leq \norm{\cL}_{\measures}^{1-\beta} \norm{\cL}_{\BV(\Oo)}^{\beta}.
 \end{equation}
This approach has the benefit, compared to Keller's generalised bounded variation, of being extremely simple to define and allowing instant access to the approximation sequences. An advantage demonstrated by the extreme brevity of the present work.

\section{Piecewise Expanding Maps}
{As before $\Oo$ denotes  the open unit interval.}
{We suppose that $\cZ \subset \Oo$ is a closed set of zero Lebesgue measure such that $\Oo\setminus\cZ$ is the countable (or indeed finite) union of open intervals.}
 {We suppose that} we are given  $\map \in \cC^{1}(\Oo\setminus \cZ, \Oo)$ which we call the \emph{map} and $\xi: \Oo\setminus \cZ \to \bC$ which we call the \emph{weighting}. 
We require that the map is expanding, i.e. that $\inf\{ \map'(x):{x\in \Oo\setminus\cZ}\} >1$.
Let $\{ \omega_{j}\}_{j\in \cJ}$ 
denote the set of connected components of $\Oo\setminus \cZ$. 
We further suppose that there exists $\alpha \in (0,1)$ such that
\begin{equation}\label{eq:assumption2}
\xi \in \cC^{\alpha}( \Oo\setminus \cZ, \bC) \quad \text{and} \quad \sum_{j\in \cJ} \LL{\infty}{\xi}{\omega_{j}} <\infty
\end{equation}
Let us clarify what we mean by the first statement in the above  since the domain $\Oo\setminus \cZ$ is not connnected. We mean that there exists  $C<\infty$ such that ${\abs{\xi(x)-\xi(y)}}\leq C {\abs{x-y}^{\alpha}}$
for all $x,y \in \omega_{j}$, for all $j\in \cJ$. I.e. that the H\"older coefficient can be chosen  uniformly for all $\omega_{j}$. 
We suppose
\begin{equation}\label{eq:assumption1}
\LL{p}{f'}{\Oo} <\infty
\quad \text{and} \quad
\LL{\infty}{\xi \cdot \map'}{\Oo} <\infty
\end{equation}
for some $p> \frac{1 }{ \alpha}$.
For any $n\in \bN$ let $\xi^{{(n)}}:= \prod_{i=0}^{n-1} \xi\circ \map^{i}$.
We finally suppose that
\begin{equation}\label{eq:assumption3}\begin{split}
\lambda_{1}&:= \lim_{n\to \infty} \LL{\infty}{\xi^{(n)}}{\Oo}^{\frac{1}{n}} < { \infty},\\ 
\lambda_{2}&:= \lim_{n\to \infty} \LL{\infty}{\xi^{(n)} \cdot (\map^{n})'}{\Oo}^{\frac{1}{n}}<\infty.
\end{split}
\end{equation}
The limits in the above exist by submultiplicativity. 
For each map $\map$ and weighting $\xi$ as introduced above we define the weighted transfer operator $\cL_{\xi, \map} : \measures\to\measures$ by
\[
\cL_{\xi,\map}\mu(\eta) := \mu(\xi \cdot f' \cdot \eta \circ f) \quad \text{for all $\eta \in \cC(\Oo)$}.
\]
The  transfer operator\footnote{Written in terms of densities of the measures, as is more common, the above defined transfer operator is given by
$\cL_{\xi,\map}h = \sum_{j}(\xi \cdot h) \circ f_{j}^{-1} \cdot \indicator_{f(\omega_{j})}$
where $f_{j}:= \left.f\right|_{\omega_{j}}$. }
 which corresponds to the push forward 
$ \map_{*}\mu(\eta) := \mu(\eta\circ \map)$ is given by the choice $\xi = \frac{1}{f'}$. In this case  $\lambda_{2}=1$. 
We know that {${ {\norm{\cL_{\xi,\map}}_{\measures} }} \leq   \LL{\infty}{\xi \cdot f'}{\Oo}$}
but beyond this these linear operators do not have good spectral properties acting on $\measures$ but they do have good spectral properties as operators acting on $\gbv{\beta}{\Oo}$ as we will see subsequently. 
\begin{main}
Suppose  the map $\map\in \cC^{1}(\Oo\setminus \cZ, \Oo)$ and the weighting $\xi:\Oo \to \bC$ are as introduced above satisfying \eqref{eq:assumption2}, \eqref{eq:assumption1},   and \eqref{eq:assumption3}. Let $
\beta = \alpha $. Then  
$\cL_{\xi,\map} : \gbv{\beta}{\Oo} \to \gbv{\beta}{\Oo} $ {has} 
 spectral radius not greater than $\lambda_{2}$ and  
  essential spectral radius not greater than $\lambda_{1}^{\beta} \lambda_{2}^{1-\beta}$.
\end{main}
The above theorem is proven  in Section~\ref{sec:proof}, using various results concerning $\gbv{\beta}{\Oo}$ which are proven in Section~\ref{sec:basicfacts}. First we make some comments.  
Note that the above theorem only gives upper bounds on the spectral radius and essential spectral radius and therefore doesn't  prove that the operators are quasi-compact. For this a lower bound for the spectral radius would be required. Such an estimate is, in general,  not trivial to prove.
Setting $\xi = \frac{1}{\map'}$ we recover the results of Keller \cite{keller1985generalized} but we require the mild additional condition $\LL{p}{\map'}{\Oo} <\infty$ whilst he does not. This is significantly better than the condition of  $\map'$ being bounded 
  which is  required by the work of Thomine   \cite{MR2784627}. 
The issue of allowing unbounded expansion near the points of discontinuity is key in many dynamical systems. For example in the Poincar\'e return maps for singular hyperbolic flows and in billiard maps due to grazing collisions. 
To conclude we note that this work successfully extends the results to the case with countable points of discontinuity and to a large class of weighted transfer operators which has the benefits discussed  in Section~\ref{sec:intro}.

\section{Basic Properties of $\gbv{\beta}{\Oo}$}\label{sec:basicfacts}
Here we prove some properties concerning $\gbv{\beta}{\Oo}$ which we will require in the next section.
\begin{lemma}\label{lem:compact}
For all $\beta \in (0,1]$ the embedding $\gbv{\beta}{\Oo} \hookrightarrow \measures$ is compact.
\end{lemma}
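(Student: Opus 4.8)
The plan is to show that the closed unit ball $B$ of $\gbv{\beta}{\Oo}$ is totally bounded as a subset of $\measures$; since $\measures$ is complete this gives precompactness of $B$, which is precisely compactness of the embedding. The only input beyond elementary estimates is the compactness of the embedding $\gbv{1}{\Oo} = \dmeasures{\Oo} \hookrightarrow \measures$, i.e. the classical fact that bounded sets of densities of bounded variation are relatively compact in $\Lspace{1}{\Oo}$.

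So fix $\ep > 0$ and fix a parameter $k > 0$ to be chosen at the end. For each $\mu \in B$, since $\norm{\mu}_{\gbv{\beta}{\Oo}} \leq 1$ and the seminorm is an infimum, choose an approximating family ${\{\mu_{k}^{\mu}\}}_{k>0} \subset \dmeasures{\Oo}$ which is near-optimal, say
\[
\sup_{k>0}\left( k^{-\beta}\TotalVar{\mu_{k}^{\mu}-\mu} + k^{1-\beta}\norm{\mu_{k}^{\mu}}_{\BV(\Oo)} \right) \leq 2.
\]
For the fixed $k$ this yields $\TotalVar{\mu_{k}^{\mu}-\mu} \leq 2k^{\beta}$ and $\norm{\mu_{k}^{\mu}}_{\BV(\Oo)} \leq 2k^{\beta-1}$. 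Hence the set $\{ \mu_{k}^{\mu} : \mu \in B \}$ lies in the ball of radius $2k^{\beta-1}$ of $\dmeasures{\Oo}$, which by the compact embedding $\dmeasures{\Oo} \hookrightarrow \measures$ is totally bounded in $\measures$. Pick finitely many $\nu_{1},\dots,\nu_{N} \in \measures$ (the number $N$ depending on $k$ and $\ep$) so that every $\mu_{k}^{\mu}$ satisfies $\TotalVar{\mu_{k}^{\mu}-\nu_{i}} < \ep/2$ for some $i$. Then for that $\mu$,
\[
\TotalVar{\mu-\nu_{i}} \leq \TotalVar{\mu-\mu_{k}^{\mu}} + \TotalVar{\mu_{k}^{\mu}-\nu_{i}} < 2k^{\beta} + \ep/2.
\]
Choosing $k$ small enough that $2k^{\beta} < \ep/2$ gives $B \subset \bigcup_{i=1}^{N} \{ \nu \in \measures : \TotalVar{\nu-\nu_{i}} < \ep \}$. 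As $\ep$ was arbitrary, $B$ is totally bounded in $\measures$, completing the proof.

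The argument is genuinely routine; the only point that needs a little care — and the place I would expect a careless write-up to slip — is that the infimum defining $\norm{\cdot}_{\gbv{\beta}{\Oo}}$ need not be attained, so one must work with a family that is merely near-optimal, and the constant in the resulting bound $\norm{\mu_{k}^{\mu}}_{\BV(\Oo)} \leq 2k^{\beta-1}$ must be the same for all $\mu \in B$, since it is this uniform bound that is fed into the $\beta=1$ compactness statement. Everything else is three applications of the triangle inequality for $\TotalVar{\cdot}$. One could equivalently present the proof sequentially: given $\norm{\mu_{n}}_{\gbv{\beta}{\Oo}}\leq 1$, take $k_{m}\downarrow 0$, perform a diagonal extraction so that $\mu_{n,k_{m}}^{\phantom{\mu}}$ converges in $\measures$ for each fixed $m$ (using compactness of $\dmeasures{\Oo}\hookrightarrow\measures$), and verify the diagonal subsequence is Cauchy in $\TotalVar{\cdot}$ via $\TotalVar{\mu_{n}-\mu_{n'}} \leq \TotalVar{\mu_{n}-\mu_{n,k_{m}}} + \TotalVar{\mu_{n,k_{m}}-\mu_{n',k_{m}}} + \TotalVar{\mu_{n',k_{m}}-\mu_{n'}}$, choosing $m$ large to kill the outer two terms and then $n,n'$ large to kill the middle one.
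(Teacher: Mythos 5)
Your proposal is correct and rests on exactly the same key input as the paper's proof, namely the compact embedding $\dmeasures{\Oo}\hookrightarrow\measures$ together with the triangle inequality and a near-optimal approximating family. The only difference is presentational: you establish total boundedness of the unit ball $B$ by producing a finite $\ep$-net in $\measures$, whereas the paper works sequentially, extracting for each $m$ a subsequence along which $\mu_{n,2^{-m}}$ converges in $\TotalVar{\cdot}$ and then passing to a diagonal. The two presentations are equivalent since $\measures$ is complete; you even observe this yourself and sketch the paper's diagonal version in your closing paragraph. If anything your $\ep$-net version is slightly leaner, since it needs only a single well-chosen value of $k$ rather than the paper's countable family $k(m)=2^{-m}$ and a subsequence-of-subsequences bookkeeping, and it makes explicit the quantitative point you flag, that the constant $2$ in the bounds $\TotalVar{\mu_{k}^{\mu}-\mu}\leq 2k^{\beta}$ and $\norm{\mu_{k}^{\mu}}_{\BV(\Oo)}\leq 2k^{\beta-1}$ must be uniform over $\mu\in B$ (the paper handles the same issue by starting from $\norm{\mu_{n}}_{\gbv{\beta}{\Oo}}\leq\frac12$ so that $M=1$ works uniformly). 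No gap; this is a faithful, if mildly repackaged, version of the paper's argument.
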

\begin{proof}
Fix a sequence ${\{\mu_{n}\}}_{n=1}^{\infty}\subset \gbv{\beta}{\Oo}$ such that $\norm{\mu_{n}}_{\gbv{\beta}{\Oo}}\leq \frac{1}{2}$ for all $n$.
By the definition of  $\norm{\cdot}_{\gbv{\beta}{\Oo}}$ for each $n$ there exists a sequence ${\{{\mu_{n,m}}\}}_{m =1}^{\infty} \subset \dmeasures{\Oo}$ such that (here we choose $k=k(m)=2^{-m}$)
\begin{equation}\label{eq:jo}
\TotalVar{  \mu_{n,m} - \mu_{n} } \leq  2^{-m \beta} \quad \text{and} \quad 
\TotalVar{  \D\mu_{n,m} } \leq  2^{m(1-\beta)}
\quad \text{for all $m\in \bN$}.
\end{equation}Fixing for the moment $m=1$ we consider the sequence ${\{\mu_{n, m}\}}_{n=1}^{\infty}$. This is a bounded subset of the space of measures with density of bounded variation by the second estimate of~\eqref{eq:jo} and so there exists a subsequence of indexes  ${\{n_{i_{m}}\}}_{i_{m}=1}^{\infty}$ such that the sequence ${\{\mu_{n_{i_{m}},m}\}}_{i_{m}=1}^{\infty}$  converges in $\TotalVar{\cdot}$.
Next we repeat for $m=2$ and 
we proceed in such a manner for all $m\in \bN$ and obtain  the diagonal sequence  ${\{\mu_{n_{i_{m}},m}\}}_{m=1}^{\infty}$ which also  converges in $\TotalVar{\cdot}$. Using this and the first estimate from \eqref{eq:jo} we have shown that 
  ${\{\mu_{n_{i_{m}}}\}}_{{m}=1}^{\infty}$ converges in $\TotalVar{\cdot}$.
\end{proof}

\begin{lemma}\label{lem:muk}
Suppose  $\mu\in \gbv{\beta}{\Oo}$, $M>\norm{\mu}_{\gbv{\beta}{\Oo}}$  and
 that $\{\mu_{k}\}_{k>0} \subset \dmeasures{\Oo} $ satisfies
\[
  k^{-\beta} \TotalVar{\mu_{k}-\mu} + k^{1-\beta} \norm{\mu_{k}}_{\BV(\Oo)} 
\leq M
\]
for all $k>0$.
Then $\norm{\mu_{k}}_{\gbv{\beta}{\Oo}} \leq 2 M$ for all $k>0$.
\end{lemma}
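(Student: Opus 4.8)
The plan is to fix $\ell>0$ and exhibit an explicit admissible family ${\{\nu_{k}\}}_{k>0}\subset\dmeasures{\Oo}$ showing $\norm{\mu_{\ell}}_{\gbv{\beta}{\Oo}}\leq 2M$. By the very definition of the seminorm it suffices to check that
\[
k^{-\beta}\TotalVar{\nu_{k}-\mu_{\ell}} + k^{1-\beta}\norm{\nu_{k}}_{\BV(\Oo)} \leq 2M \qquad \text{for all } k>0 .
\]
The natural choice is to recycle the hypothesised family: put $\nu_{k}:=\mu_{\ell}$ for $0<k\leq\ell$ and $\nu_{k}:=\mu_{k}$ for $k>\ell$. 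Both alternatives lie in $\dmeasures{\Oo}$, so the family is admissible, and the verification splits into the two regimes.

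For $0<k\leq\ell$ the first term vanishes and, since $1-\beta\geq 0$, we have $k^{1-\beta}\leq\ell^{1-\beta}$; hence $k^{1-\beta}\norm{\nu_{k}}_{\BV(\Oo)} = k^{1-\beta}\norm{\mu_{\ell}}_{\BV(\Oo)} \leq \ell^{1-\beta}\norm{\mu_{\ell}}_{\BV(\Oo)} \leq M$ by the hypothesis applied at $\ell$. For $k>\ell$ I would write $\TotalVar{\mu_{k}-\mu_{\ell}}\leq\TotalVar{\mu_{k}-\mu}+\TotalVar{\mu-\mu_{\ell}}$, multiply by $k^{-\beta}$, and use $k^{-\beta}\leq\ell^{-\beta}$ on the second summand to obtain
\[
k^{-\beta}\TotalVar{\nu_{k}-\mu_{\ell}} + k^{1-\beta}\norm{\nu_{k}}_{\BV(\Oo)} \leq \Bigl( k^{-\beta}\TotalVar{\mu_{k}-\mu} + k^{1-\beta}\norm{\mu_{k}}_{\BV(\Oo)} \Bigr) + \ell^{-\beta}\TotalVar{\mu-\mu_{\ell}} .
\]
The hypothesis at $k$ bounds the bracket by $M$, and the hypothesis at $\ell$ bounds $\ell^{-\beta}\TotalVar{\mu-\mu_{\ell}}$ by $M$ (discarding the nonnegative $\BV$ term), so the right-hand side is at most $2M$. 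Taking the supremum over $k>0$ and then the infimum over admissible families in the definition of $\norm{\cdot}_{\gbv{\beta}{\Oo}}$ yields $\norm{\mu_{\ell}}_{\gbv{\beta}{\Oo}}\leq 2M$, and $\ell$ was arbitrary.

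The argument is essentially bookkeeping; the only genuine choices are (i) recognising that the witnessing family for $\mu_{\ell}$ should be the hypothesised family itself, truncated to equal $\mu_{\ell}$ below the threshold $k=\ell$, and (ii) keeping the pair $\bigl(k^{-\beta}\TotalVar{\mu_{k}-\mu},\,k^{1-\beta}\norm{\mu_{k}}_{\BV(\Oo)}\bigr)$ together when invoking the hypothesis, since only their sum — not either term individually — is known to be $\leq M$. The two monotonicity estimates $k^{1-\beta}\leq\ell^{1-\beta}$ (for $k\leq\ell$) and $k^{-\beta}\leq\ell^{-\beta}$ (for $k\geq\ell$) used above need only $\beta\in[0,1]$, which holds throughout, so no further care is required.
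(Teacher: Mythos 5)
Your proof is correct and follows essentially the same route as the paper: you construct the witnessing family for $\mu_{\ell}$ by reusing the hypothesised family above the threshold $\ell$ and freezing it at $\mu_{\ell}$ below, then split the verification into the two regimes, using the triangle inequality together with the monotonicity of $k^{-\beta}$ and $k^{1-\beta}$. The only difference from the paper is notational (your $\ell$ and $k$ play the roles of the paper's $k$ and $j$, and the boundary case is placed on the other side of the split), which is immaterial.
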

\begin{proof}
Fix $k>0$. Define $\nu_{j}\in \dmeasures{\Oo}$ for all $j>0$ by
\[
\nu_{j} := \begin{cases}
\mu_{j} & \text{if $j\geq k$}\\
\mu_{k} & \text{if $j< k$}.
\end{cases}
\]
We use this as an approximating sequence to estimate $\norm{\mu_{k}}_{\gbv{\beta}{\Oo}}$. First for $j\geq k$ we have
\[
\begin{split}
 j^{-\beta} \TotalVar{\nu_{j}-\mu_{k}} + j^{1-\beta} \norm{\nu_{j}}_{\BV(\Oo)}
 &= j^{-\beta} \TotalVar{\mu_{j}-\mu_{k}} + j^{1-\beta} \norm{\mu_{j}}_{\BV(\Oo)}\\
 &\leq  j^{-\beta} \TotalVar{\mu_{j}-\mu}   + j^{1-\beta} \norm{\mu_{j}}_{\BV(\Oo)}\\
 &\ \ \ + k^{-\beta} \TotalVar{\mu-\mu_{k}}\\
 & \leq 2 M.
\end{split}
\]
Additionally we have $j^{-\beta} \TotalVar{\nu_{j}-\mu_{k}} + j^{1-\beta} \norm{\nu_{j}}_{\BV(\Oo)}
= j^{1-\beta} \norm{\nu_{k}}_{\BV(\Oo)}
\leq M$
in  the case $j< k$.
\end{proof}

\begin{lemma}
\label{lem:Lpbound}
Suppose $\beta \in (0,1)$, $p>\frac{1}{\beta}$. There exists $C<\infty$ such that
\[
\abs{\mu (\eta) } \leq C \LL{p}{\eta}{\Oo} \norm{\mu}_{\gbv{\beta}{\Oo}}
\]
for all $\eta \in \Lspace{p}{\Oo}$ and $\mu \in \gbv{\beta}{\Oo}$.
\end{lemma}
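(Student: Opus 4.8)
The plan is to obtain the bound in two steps: first show that every $\mu\in\gbv{\beta}{\Oo}$ automatically spreads its mass like a $\beta$--H\"older (Frostman) measure, namely $\TotVar{\mu}{A}\leq 2\norm{\mu}_{\gbv{\beta}{\Oo}}\,\m(A)^{\beta}$ for every Borel $A\subseteq\Oo$ (here $\m$ is Lebesgue measure); then pair $\mu$ with $\eta$ through the layer--cake formula, the hypothesis $p>\frac1\beta$ being exactly what makes the resulting tail integral converge.

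\emph{Step 1 (Frostman bound).} Fix $\mu\in\gbv{\beta}{\Oo}$ and $M>\norm{\mu}_{\gbv{\beta}{\Oo}}$, and choose a family $\{\mu_{k}\}_{k>0}\subset\dmeasures{\Oo}$ with $k^{-\beta}\TotalVar{\mu_{k}-\mu}+k^{1-\beta}\norm{\mu_{k}}_{\BV(\Oo)}\leq M$ for every $k>0$. Each $\mu_{k}$ has a bounded--variation density $h_{k}$, and on the unit interval one has the elementary estimate $\norm{h_{k}}_{\infty}\leq\TotalVar{\mu_{k}}+\TotalVar{D\mu_{k}}=\norm{\mu_{k}}_{\BV(\Oo)}$ (write $h_{k}(x)-h_{k}(y)$ as an integral of $D\mu_{k}$ and average over $y\in\Oo$, using $\m(\Oo)=1$). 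Hence, for any Borel $A\subseteq\Oo$ and any $k>0$,
\[
\TotVar{\mu}{A}\leq\TotalVar{\mu-\mu_{k}}+\TotVar{\mu_{k}}{A}\leq k^{\beta}M+\norm{h_{k}}_{\infty}\,\m(A)\leq k^{\beta}M+k^{\beta-1}M\,\m(A).
\]
Taking $k=\m(A)$ (and letting $k\to0$ when $\m(A)=0$) gives $\TotVar{\mu}{A}\leq 2M\,\m(A)^{\beta}$. In particular $\mu\ll\m$, so the pairing $\mu(\eta)$ is meaningful for $\eta\in\Lspace{p}{\Oo}$.

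\emph{Step 2 (layer cake).} Let $\eta\in\Lspace{p}{\Oo}$. Chebyshev's inequality gives $\m(\{\abs{\eta}>s\})\leq s^{-p}\bigl(\LL{p}{\eta}{\Oo}\bigr)^{p}$, so using Step~1 together with $\TotalVar{\mu}\leq\norm{\mu}_{\gbv{\beta}{\Oo}}\leq M$,
\[
\abs{\mu(\eta)}\leq\int_{\Oo}\abs{\eta}\,d\abs{\mu}=\int_{0}^{\infty}\TotVar{\mu}{\{\abs{\eta}>s\}}\,ds\leq\int_{0}^{\infty}\min\bigl\{M,\, 2M\,s^{-p\beta}\bigl(\LL{p}{\eta}{\Oo}\bigr)^{p\beta}\bigr\}\,ds.
\]
Splitting the integral at $s_{0}:=2^{1/(p\beta)}\LL{p}{\eta}{\Oo}$ and bounding the integrand by $M$ on $[0,s_{0}]$ and by $2M\,s^{-p\beta}\bigl(\LL{p}{\eta}{\Oo}\bigr)^{p\beta}$ on $[s_{0},\infty)$, the tail $\int_{s_{0}}^{\infty}s^{-p\beta}\,ds$ is finite precisely because $p\beta>1$ (i.e.\ $p>\frac1\beta$), and a routine computation bounds the whole expression by $C_{p,\beta}\,M\,\LL{p}{\eta}{\Oo}$ for a constant $C_{p,\beta}$ depending only on $p$ and $\beta$. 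Letting $M\downarrow\norm{\mu}_{\gbv{\beta}{\Oo}}$ yields the claim with $C=C_{p,\beta}$.

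The only real obstacle is Step~1: recognising that membership in $\gbv{\beta}{\Oo}$ already forces the correct scaling of $\TotVar{\mu}{\cdot}$ on small sets (and, as a by--product, absolute continuity of $\mu$). Once that is available, Step~2 is the standard distribution--function computation, and the hypothesis $p>\frac1\beta$ enters only to guarantee convergence of the tail integral. A minor point to treat with care in Step~1 is the $L^{\infty}$ bound for bounded--variation densities on the unit interval.
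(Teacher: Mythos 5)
Your proof is correct, and it follows essentially the same route as the paper: match the approximation scale $k$ to the Lebesgue measure of a level set of $\eta$, invoke Chebyshev for the size of that level set, and use $p\beta>1$ to sum (or integrate) the resulting tail. The organizational difference is that you first extract an explicit intermediate estimate, the Frostman-type bound $\TotVar{\mu}{A}\leq 2\norm{\mu}_{\gbv{\beta}{\Oo}}\,\m(A)^{\beta}$, and then run the computation continuously via the layer-cake formula, whereas the paper works directly with a dyadic decomposition of $\Oo$ into annuli $A_{n}=\{a_{n-1}<\abs{\eta}\leq a_{n}\}$ and sums a geometric series. You also reach for a slightly different auxiliary fact: the $\mathbf{L^{\infty}}$ bound $\LL{\infty}{h_{k}}{\Oo}\leq\norm{\mu_{k}}_{\BV(\Oo)}$ for $\BV$ densities on the unit interval, where the paper instead uses the $\mathbf{L^{1}}$-duality estimate $\abs{\mu(\eta)}\leq 2\norm{\mu}_{\BV(\Oo)}\LL{1}{\eta}{\Oo}$ from its footnote; the two are equivalent up to a constant. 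Your packaging buys a little: the Frostman bound makes the absolute continuity $\mu\ll\m$ immediate (the paper only notes the related $\mathbf{L^{q}}$-density consequence after the lemma), and isolates the scaling of $\TotVar{\mu}{\cdot}$ on small sets as a reusable statement in its own right. Both proofs deliver the same constant-quality bound with the same dependence on $p-\tfrac{1}{\beta}$.
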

{The above lemma has the interesting consequence that if $\mu \in    \gbv{\beta}{\Oo}$ then $\mu$ has density in $\Lspace{q}{\Oo}$ for all $q<\frac{\beta^{-1}}{\beta^{-1}-1}$.  }
\begin{proof}[Proof of Lemma~\ref{lem:Lpbound}]
Fix  $\eta \in \Lspace{p}{\Oo}$ and $\mu\in \gbv{\beta}{\Oo}$. For $n\in \bN$ let $a_{n} :=  2^{n} \LL{p}{\eta}{\Oo}$ and hence let
\[
\begin{split}
A_{0} &:= \{ x\in \Oo : \abs{\eta}(x) \leq a_{0} \},\\
A_{n} &:= \{ x\in \Oo : a_{n-1} < \abs{\eta}(x) \leq a_{n} \},
\end{split}
\]
for all $n\in \{1,2,\ldots\} $.
Note that these sets are disjoint and that $\bigcup_{n=0}^{\infty} A_{n} = \Oo$. 
Further note that\footnote{We use the notation $\m$ to denote Lebesgue measure on $\Oo$.}  $\LL{p}{\eta}{\Oo} \geq ( \m(A_{n}) 2^{n p} \LL{p}{\eta}{\Oo}^{p})^{\frac{1}{p}}$ and so
\[
\m(A_{n}) \leq 2^{-n p} \quad \quad \text{for all $n\in \bN$}.
\]
We estimate $\abs{\mu( \eta \cdot \indicator_{A_{n}} )} \leq { a_{n}} \abs{\mu (\indicator_{A_{n}})}$. 
Since $\mu\in \gbv{\beta}{\Oo}$ for each $M>\norm{\mu}_{\gbv{\beta}{\Oo}}$ there exists 
$\{\mu_{k}\}_{k>0} $ such that,  for all $k>0$ 
\begin{equation}
k^{-\beta} \TotVar{\mu - \mu_{k}}{\Oo} +  k^{1-\beta} \bvnorm{\mu_{k}}
\leq M.
\end{equation}
Fix for the moment $n\in \bN$. Let $k= k(n) = 2^{-p n}$. We have\footnote{\label{ft:BVL1}We have
$\abs{\mu(\eta)} \leq  2 \norm{\mu}_{\BV(\Oo)} \LL{1}{\eta}{\Oo}$
for all $\mu\in \dmeasures{\Oo}$, $\eta\in \Lspace{1}{\Oo}$.
It suffices to prove this for $\eta\in \cC(\Oo)$ by Lusin's Theorem. Let
$\phi(x):= \int_{0}^{x}\eta(y) \ dy - x  \int_{\Oo}\eta(y) \ dy$ where $\Oo =(0,1)$.
This means that $\phi(0) = \phi(1) = 0$ and that $\phi'(x) = \eta(x) - \int_{\Oo} \eta(y) \ dy$. Consequently $\LL{\infty}{\phi}{\Oo} \leq 2 \LL{1}{\eta}{\Oo}$ and 
$\mu(\eta) = - D\mu(\phi) +  \mu(1) \int_{\Oo} \eta(y) \ dy$.}
\[
\begin{split}
\abs{\mu (\indicator_{A_{n}})} 
&\leq \TotalVar{\mu - \mu_{k}} + \abs{\mu_{k} (\indicator_{A_{n}})}\\
& \leq ( k^{\beta} + 2^{-n p} k^{-(1-\beta)})M \leq 2^{-n \beta p} (2 M).
\end{split}
\]
This means that $2^{n} \abs{\mu (\indicator_{A_{n}})}  \leq 2^{-n( \beta p - 1)} (2 M)$ and we recall that $\beta p >1 $ by assumption and so this quantity is summable over $n$.
This means that there exists   $C<\infty$, dependent only on $p-\frac{1}{\beta}$, such that
\[
\abs{\mu(\eta)} \leq  \sum_{n=0}^{\infty} \abs{\mu(\eta \cdot \indicator_{A_{n}})} \leq C M  \LL{p}{\eta}{\Oo}.\qedhere
\]
\end{proof}

\section{Proof of the Main Theorem} 
\label{sec:proof}
At the moment we consider $f:\Oo\setminus \cZ \to\Oo$ and $\xi:\Oo\setminus\cZ \to \bC$ to be  fixed and satisfying 
\eqref{eq:assumption2}, \eqref{eq:assumption1},   and \eqref{eq:assumption3}.
Recall that $\xi$ is assumed to be $\alpha$-H\"older on each $\omega_{j}$ with H\"older coefficient uniform for all $j$.
We require a smoothed version of $\xi$.
In order to construct this we will use convolution with a smooth mollifier: Fix $\rho\in \cC^{1}(\bR, [0,1])$ with support contained within $(-1,1)$ and which satisfies $\int_{-1}^{1} \rho(x) \ dx =1$ and $\sup_{x\in \bR} \abs{\rho'(x)} \leq 2$. 
For all $\epsilon>0$ let $\rho_{\epsilon}(x):=  \epsilon^{-1}\rho(\epsilon^{-1} x)$. 
Note that $\rho_{\epsilon}$ has support contained within $(-\epsilon,\epsilon)$.
Fix for the moment $j \in \cJ$.
Let $\tilde \xi :\bR \to \bC $ denote the continuous function which is equal to ${\xi}$ on $\omega_{j}$ and constant elsewhere.
For each $\epsilon>0$ let $\xi_{\epsilon}: \omega_{j}\to \bC$ be defined as $\xi_{\epsilon}:=\rho_{\epsilon}*\tilde\xi$. Note that $\xi_{\epsilon}\in \cC^{1}(\omega_{j},\bC)$ and $\LL{\infty}{\xi_{\epsilon}}{\omega_{j}}  \leq \LL{\infty}{\xi }{\omega_{j}} $.
By performing this construction for each $j$ we define $\xi_{\epsilon}: \Oo\setminus \cZ \to \bC$.
We have the following estimates, a standard property\footnote{It suffices to note that
$(\xi_{\epsilon}-\xi)(x) = \int \rho_{\epsilon}(x-y)[\xi(y)-\xi(x)] \ dy$ and  that $\xi_{\epsilon}'(x) = \int \rho_{\epsilon}'(x-y)[\xi(y)-\xi(x)] \ dy$.} for H\"older continuous functions:
There exists $C_{\xi}<\infty$  such that  for all  $\epsilon>0$  
\begin{equation}\label{eq:smoothxi}
\LL{\infty}{ \xi_{\epsilon}-\xi}{\Oo} \leq C_{\xi} \epsilon^{\alpha} 
\quad \text{and} \quad
\LL{\infty}{ \xi_{\epsilon}' }{\Oo} \leq C_{\xi}  \epsilon^{-(1-\alpha)}.
\end{equation}For all $\epsilon>0$ we let $\cP_{\epsilon}:= \cL_{\xi_{\epsilon},\map}$. I.e. for each $\mu \in \measures$
the operator is defined as
$\cP_{\epsilon}\mu(\eta) = \mu( \xi_{\epsilon}\cdot \map' \cdot \eta \circ \map)$ for all $\eta \in \cC(\Oo)$.

\begin{lemma}\label{lem:estBV}
There exists $C<\infty$ such that, for all $\epsilon>0$ and $\mu\in \dmeasures{\Oo}$
\[
\norm{ \cP_{\epsilon} \mu  }_{\BV(\Oo)} \leq  { 6}  \LL{\infty}{\xi}{\Oo} \norm{\mu}_{\BV(\Oo)}  + C \epsilon^{-(1-\alpha)} \TotVar{\mu}{\Oo}.
\]
\end{lemma}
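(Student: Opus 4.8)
\emph{Approach.} I would work throughout with densities. Write $\mu=h\,\m$ with $h\in\BV(\Oo)$, so that $\TotVar{\mu}{\Oo}=\LL{1}{h}{\Oo}$, $\TotVar{D\mu}{\Oo}$ is the essential variation of $h$ on $\Oo$, and $\bvnorm{\mu}=\TotVar{D\mu}{\Oo}+\LL{1}{h}{\Oo}$. By the footnote formula, $\cP_{\epsilon}\mu=(\cP_{\epsilon}h)\,\m$ with $\cP_{\epsilon}h=\sum_{j\in\cJ}(\xi_{\epsilon}h)\circ f_{j}^{-1}\cdot\indicator_{J_{j}}$, where $f_{j}:=\left.f\right|_{\omega_{j}}$ is an increasing diffeomorphism of $\omega_{j}=(c_{j},d_{j})$ onto $J_{j}:=f(\omega_{j})$; the series converges in $\LL{1}{\cdot}{\Oo}$ since $\sum_{j}\int_{\omega_{j}}\abs{\xi_{\epsilon}}\abs{h}f'\,\m\le\LL{\infty}{h}{\Oo}\sum_{j}\LL{\infty}{\xi}{\omega_{j}}<\infty$ by \eqref{eq:assumption2}. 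It then remains to bound $\norm{\cP_{\epsilon}\mu}_{\BV(\Oo)}=\TotVar{D(\cP_{\epsilon}\mu)}{\Oo}+\LL{1}{\cP_{\epsilon}h}{\Oo}$, and the variation term is where the work lies.

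By sublinearity and lower semicontinuity of the variation seminorm, $\TotVar{D(\cP_{\epsilon}\mu)}{\Oo}$ is at most $\sum_{j}$ of the variation over $\Oo$ of the single bump $(\xi_{\epsilon}h)\circ f_{j}^{-1}\cdot\indicator_{J_{j}}$, and this is at most $\TotVar{D(\xi_{\epsilon}h)}{\omega_{j}}$ (invariance of the variation under the monotone change of variable $f_{j}^{-1}$) plus the two boundary jumps $\abs{(\xi_{\epsilon}h)(c_{j}^{+})}+\abs{(\xi_{\epsilon}h)(d_{j}^{-})}$. For the interior parts I would apply the Leibniz rule $D(\xi_{\epsilon}h)=\xi_{\epsilon}'\,h\,\m+\xi_{\epsilon}\,Dh$ on $\omega_{j}$, together with $\LL{\infty}{\xi_{\epsilon}}{\omega_{j}}\le\LL{\infty}{\xi}{\omega_{j}}$ and $\LL{\infty}{\xi_{\epsilon}'}{\Oo}\le C_{\xi}\epsilon^{-(1-\alpha)}$ from \eqref{eq:smoothxi}; summing, and using $\sum_{j}\TotVar{D\mu}{\omega_{j}}\le\TotVar{D\mu}{\Oo}$, gives $\sum_{j}\TotVar{D(\xi_{\epsilon}h)}{\omega_{j}}\le C_{\xi}\epsilon^{-(1-\alpha)}\LL{1}{h}{\Oo}+\LL{\infty}{\xi}{\Oo}\TotVar{D\mu}{\Oo}$.

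The boundary jumps are the main obstacle. Fix a small scale $s=s(\epsilon)$ and split $\cJ=\cJ_{\mathrm{short}}\cup\cJ_{\mathrm{long}}$ according to whether $\m(J_{j})\le 2s$ or not. For $j\in\cJ_{\mathrm{short}}$ I would bound the $j$-th bump crudely by $\TotVar{D(\xi_{\epsilon}h)}{\omega_{j}}+2\LL{\infty}{\xi}{\omega_{j}}\LL{\infty}{h}{\Oo}$; since $\sum_{j}\LL{\infty}{\xi}{\omega_{j}}<\infty$, dominated convergence gives $\sum_{j\in\cJ_{\mathrm{short}}}\LL{\infty}{\xi}{\omega_{j}}\to 0$ as $s\to0$, so the extra contribution is $o(1)\cdot\LL{\infty}{h}{\Oo}$. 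For $j\in\cJ_{\mathrm{long}}$ I would bound $\abs{(\xi_{\epsilon}h)(c_{j}^{+})}\le\LL{\infty}{\xi}{\omega_{j}}\abs{h(c_{j}^{+})}$ and estimate the one-sided limit by a localised average: if $I_{j}^{L}$ is the $f_{j}$-preimage of the length-$s$ interval at the left end of $J_{j}$ (and symmetrically $I_{j}^{R}$ at $d_{j}$), then $\abs{h(c_{j}^{+})}\le\frac{1}{s}\int_{I_{j}^{L}}\abs{h}f'\,\m+\TotVar{D\mu}{I_{j}^{L}}$. The $\TotVar{D\mu}{\cdot}$ terms sum, the $I_{j}^{L},I_{j}^{R}$ being pairwise disjoint, to $\le 2\LL{\infty}{\xi}{\Oo}\TotVar{D\mu}{\Oo}$, so the remaining task is to control $\frac{1}{s}\sum_{j}\LL{\infty}{\xi}{\omega_{j}}\int_{I_{j}^{L}\cup I_{j}^{R}}\abs{h}f'\,\m$. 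Here I would truncate $f'$ at a large level $K=K(\epsilon)$: the part where $f'\le K$ is $\le\frac{2K}{s}\LL{\infty}{\xi}{\Oo}\LL{1}{h}{\Oo}$ by disjointness, while the part where $f'>K$ is $\le\frac{2}{s}\LL{\infty}{\xi}{\Oo}\LL{\infty}{h}{\Oo}\int_{\Oo}(f'-K)_{+}\,\m$ with $\int_{\Oo}(f'-K)_{+}\,\m\le K^{-(p-1)}\LL{p}{f'}{\Oo}^{p}$ by Chebyshev and H\"older --- this is exactly where \eqref{eq:assumption1} enters. Choosing $K=\epsilon^{-a}$ and $s=\epsilon^{\,1-\alpha-a}$ with $\tfrac{1-\alpha}{p}<a<1-\alpha$ (possible since $p>1$) makes $K/s=\epsilon^{-(1-\alpha)}$ exactly, while $s^{-1}K^{-(p-1)}=\epsilon^{\,ap-(1-\alpha)}\to0$; thus the first part has the admissible form $C\epsilon^{-(1-\alpha)}\TotVar{\mu}{\Oo}$ and the second is $o(1)\cdot\LL{\infty}{h}{\Oo}$.

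For $\LL{1}{\cP_{\epsilon}h}{\Oo}\le\sum_{j}\int_{\omega_{j}}\abs{\xi_{\epsilon}}\abs{h}f'\,\m$ the same truncation of $f'$ at level $K$ gives $\le K\LL{\infty}{\xi}{\Oo}\LL{1}{h}{\Oo}+\LL{\infty}{\xi}{\Oo}\LL{\infty}{h}{\Oo}K^{-(p-1)}\LL{p}{f'}{\Oo}^{p}\le\LL{\infty}{\xi}{\Oo}\epsilon^{-(1-\alpha)}\LL{1}{h}{\Oo}+o(1)\cdot\LL{\infty}{h}{\Oo}$ (using $K=\epsilon^{-a}\le\epsilon^{-(1-\alpha)}$), with no variation of $h$ needed. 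Collecting everything and using $\LL{\infty}{h}{\Oo}\le\TotVar{D\mu}{\Oo}+\LL{1}{h}{\Oo}$, the coefficient of $\TotVar{D\mu}{\Oo}$ is $\LL{\infty}{\xi}{\Oo}$ (interior) $+\,2\LL{\infty}{\xi}{\Oo}$ (long-branch jumps) $+\,o(1)$, hence $\le 6\LL{\infty}{\xi}{\Oo}$ for $\epsilon$ small, while the remaining contributions are either $O(\epsilon^{-(1-\alpha)})\LL{1}{h}{\Oo}$ or $o(1)\LL{1}{h}{\Oo}$ and are absorbed into $C\epsilon^{-(1-\alpha)}\TotVar{\mu}{\Oo}$ together with the slack $6\LL{\infty}{\xi}{\Oo}\TotVar{\mu}{\Oo}$ obtained by writing the leading term as $6\LL{\infty}{\xi}{\Oo}\bvnorm{\mu}$. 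The boundary-jump estimate for the short branches is the only genuinely delicate step, and there the precise matching of the localisation scale $s$, the truncation level $K$ and the exponent $p$ is what makes it work; as elsewhere in the construction $\epsilon$ is understood to lie in a bounded range, with $C$ depending on $f$, $\xi$, $\alpha$, $p$ and on a fixed upper bound for $\epsilon$.
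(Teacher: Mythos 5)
Your argument is correct, but it follows a genuinely different route from the paper's. The paper never touches the density: it works dually against test functions $\eta\in\cC^{1}(\Oo)$, subtracts from the discontinuous function $\eta\circ \map\cdot\xi_{\epsilon}$ a branchwise-affine interpolant $\psi_{\eta,\epsilon}$ matching the one-sided limits at the endpoints of each $\omega_{j}$ so that the difference lies in $\cC(\Oo)$, and then bounds $\mu(\psi_{\eta,\epsilon}')$ by splitting $\cJ$ at a finite index $j_{0}$ chosen so that $\sum_{j>j_{0}}\LL{\infty}{\xi}{\omega_{j}}\le\LL{\infty}{\xi}{\Oo}$; on the finitely many short indices it uses $\abs{\omega_{j}}^{-1}\le C_{j_{0}}$ and pairs against $\TotVar{\mu}{\Oo}$, while on the tail it exploits $\LL{1}{\psi_{\eta,\epsilon}'}{\omega_{j}}\le 2\LL{\infty}{\xi}{\omega_{j}}$ and the footnote's $\mathbf{L^{1}}$--$\BV$ duality. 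Crucially, the paper's proof of this lemma makes no use of the hypothesis $\LL{p}{\map'}{\Oo}<\infty$; that enters the Main Theorem only through Lemma~\ref{lem:Lpbound}, inside the proof of Lemma~\ref{lem:LY}. You, by contrast, work with the density, control the endpoint values $\abs{h(c_{j}^{+})},\abs{h(d_{j}^{-})}$ via a one-sided average over the preimage of a range interval of length $s$ plus a local variation, split branches by whether $\m(J_{j})>2s$, and render the averages summable by truncating $\map'$ at level $K$ using Chebyshev/H\"older -- which is precisely where you pull in $\LL{p}{\map'}{\Oo}<\infty$ -- with the two scales tuned as $s=\epsilon^{1-\alpha-a}$, $K=\epsilon^{-a}$, $\tfrac{1-\alpha}{p}<a<1-\alpha$. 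Both constructions handle countably many branches, and both implicitly give the lemma only for $\epsilon$ in a bounded range (the paper's variation estimate likewise carries an $\epsilon$-independent $2C_{j_{0}}\LL{\infty}{\xi}{\Oo}\TotVar{\mu}{\Oo}$ that must be absorbed into $C\epsilon^{-(1-\alpha)}\TotVar{\mu}{\Oo}$), which is harmless since the lemma is invoked only for $\epsilon(\ell)=\epsilon_{0}\ell$ with $\ell<\ell_{0}$. The paper's route is shorter, avoids the two-parameter tuning and the short/long and $\map'\le K$/$\map'>K$ case analyses, and keeps the $\mathbf{L^{p}}$ hypothesis out of this lemma where it is not needed; your route is more explicit about the behaviour of the density at branch boundaries and is the natural attack when thinking in terms of functions rather than measures, but it pays for this concreteness with the extra assumption and the delicate matching of $s$, $K$ and $p$.
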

\begin{proof}
Since $\sum_{j=0}^{\infty}   \LL{\infty}{  \xi }{\omega_{j}} <\infty$ by assumption \eqref{eq:assumption2},  we may choose some $j_{0}<\infty$ sufficiently large  such that (or in the case that the set $\cZ$ is  finite this step may of course be omitted)
\begin{equation}\label{eq:joel}
\sum_{j=j_{0}+1}^{\infty}   \LL{\infty}{  \xi }{\omega_{j}} \leq  \LL{\infty}{  \xi }{\Oo}. 
\end{equation}Let\footnote{We use the notation  $\abs{\omega}$ to denote the length of any interval $\omega$.} $C_{j_{0}}:= \sup\{  \abs{\omega_{j}}^{-1}: j\in \{1,2,\ldots, j_{0}\}\} < \infty$. 
Fix now $\mu\in \dmeasures{\Oo}$. For all $\eta \in \cC^{1}(\Oo,\bC)$  we have  
 $
{ \cP_{\epsilon}\mu(\eta')}
 =  \mu( [\eta\circ \map  \cdot \xi_{\epsilon}  ]'  )
  -  \mu( \eta\circ \map  \cdot \xi_{\epsilon}'  )$.
  Note that $( \eta \circ \map  \cdot \xi_{\epsilon} ) \in \cC^{1}(\omega_{j}, \bC) $ for each $j$ but may be discontinuous on $\cZ$. 
We let $\psi_{\eta,\epsilon}:\Oo\to\bC$ denote the function which is affine on each $\omega_{j}=(a_{j},b_{j})$ and is such that $( \eta\circ \map  \cdot \xi_{\epsilon} -  \psi_{\eta,\epsilon} )(x) \to 0$ as $x\nearrow b_{j}$ and as $x\searrow a_{j}$ for each $j$. 
We can now write
 \begin{equation}\label{eq:gogo}
\cP_{\epsilon}\mu(\eta')
 =  \mu \left( (\eta\circ \map \cdot \xi_{\epsilon}  - \psi_{\eta,\epsilon} )'  \right)
+  \mu \left(  \psi_{\eta,\epsilon}'  \right)
  -  \mu  \left( \eta \circ\map  \cdot \xi_{\epsilon}' \right).
 \end{equation}
 Since  $ [\eta\circ \map  \cdot \xi_{\epsilon} - \psi_{\eta,\epsilon}] \in \cC(\Oo)$ and  $\LL{\infty}{\psi_{\eta,\epsilon}}{\omega_{j}} \leq \LL{\infty}{  \xi_{\epsilon} }{\omega_{j}}$ the first term may be estimated as $\lvert{ \mu( [\eta\circ \map  \cdot \xi_{\epsilon} - \psi_{\eta,\epsilon}]'  )}\rvert \leq 2 \LL{\infty}{  \xi }{\Oo}  \TotVar{ D\mu}{\Oo}$.
We turn our attention to the second term.
Note that $\LL{\infty}{\psi_{\eta,\epsilon}'}{\omega_{j}} \leq 2 \LL{\infty}{  \xi }{\omega_{j}} \abs{\omega_{j}}^{-1}$ which means that $\LL{1}{\psi_{\eta,\epsilon}'}{\omega_{j}} \leq 2 \LL{\infty}{  \xi }{\omega_{j}}$.
Hence, by \eqref{eq:joel} and reusing  the comment of Footnote~\ref{ft:BVL1},  we have that 
\[
\lvert{  \mu(  \psi_{\eta,\epsilon}'  ) }\rvert  \leq   { 4} \LL{\infty}{  \xi }{\Oo} \norm{\mu}_{\BV(\Oo)} + 2 C_{j_{0}}  \LL{\infty}{  \xi }{\Oo} \TotVar{\mu}{\Oo}.
\] 
For the final term of \eqref{eq:gogo} we have
$\lvert{\mu ( \eta\circ \map  \cdot \xi_{\epsilon}' )}\rvert \leq C_{\xi} \epsilon^{-(1-\alpha)}  \TotVar{\mu}{\Oo}$ where we used the estimate for $\LL{\infty}{\xi_{\epsilon}'}{\Oo}$ from \eqref{eq:smoothxi}.
Summing these above estimates for the three terms of \eqref{eq:gogo} we have shown that\footnote{ {We do not claim that the constants which appear here are optimal. This is not required for this argument. A similar comment applies to the constants in subsequent calculations. They will have no impact on the final estimate of the essential spectral radius.}}
\[
\TotVar{\D \cP_{\epsilon}\mu}{\Oo} \leq   { 6}  \LL{\infty}{  \xi }{\Oo} \norm{\mu}_{\BV(\Oo)} + \left( 2 C_{j_{0}}  \LL{\infty}{  \xi }{\Oo} 
+ C_{\xi} 2^{(1-\alpha)\epsilon} \right)  \TotVar{\mu}{\Oo}.
\]
Furthermore we have the  estimate $\TotalVar{\cP_{\epsilon}\mu} \leq \LL{\infty}{\xi\cdot\map'}{\Oo} \TotalVar{\mu}$.
\end{proof}

\begin{lemma}\label{lem:LY}
There exists $C<\infty$ such that, for all $\mu \in \gbv{\beta}{\Oo}$ 
\[
\norm{\cL_{\xi,f}\mu}_{\gbv{\beta}{\Oo}} \leq  10 \LL{\infty}{\xi}{\Oo}^{\beta} \LL{\infty}{\xi \cdot f'}{\Oo}^{1-\beta} \norm{\mu}_{\gbv{\beta}{\Oo}} 
+ C \TotVar{\mu}{\Oo}.
\]
\end{lemma}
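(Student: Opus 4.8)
The plan is to prove a Lasota--Yorke type inequality for $\cL_{\xi,f}$ on $\gbv{\beta}{\Oo}$ by exploiting the interpolation structure of the norm together with the pointwise estimates already available. Specifically, fix $\mu\in\gbv{\beta}{\Oo}$ and $M>\norm{\mu}_{\gbv{\beta}{\Oo}}$, and pick an approximating family $\{\mu_{k}\}_{k>0}\subset\dmeasures{\Oo}$ realising (within $M$) the infimum in the definition of the norm. I will build an approximating family for $\cL_{\xi,f}\mu$ by setting $\nu_{k}:=\cP_{\epsilon(k)}\mu_{k}$ for a suitable choice of scale $\epsilon=\epsilon(k)$, where $\cP_{\epsilon}=\cL_{\xi_{\epsilon},f}$ is the smoothed operator introduced above. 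The point is that $\cP_{\epsilon}\mu_{k}\in\dmeasures{\Oo}$ (since $\xi_{\epsilon}$ is $\cC^{1}$ on each $\omega_{j}$ and $\mu_{k}$ has $\BV$ density), so $\{\nu_{k}\}$ is an admissible family, and then I estimate the two defining quantities $k^{-\beta}\TotalVar{\nu_{k}-\cL_{\xi,f}\mu}$ and $k^{1-\beta}\norm{\nu_{k}}_{\BV(\Oo)}$.

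For the $\BV$ term I would use Lemma~\ref{lem:estBV} directly: $\norm{\cP_{\epsilon}\mu_{k}}_{\BV(\Oo)}\leq 6\LL{\infty}{\xi}{\Oo}\norm{\mu_{k}}_{\BV(\Oo)}+C\epsilon^{-(1-\alpha)}\TotalVar{\mu_{k}}$. For the error term I would split $\nu_{k}-\cL_{\xi,f}\mu = \cP_{\epsilon}\mu_{k}-\cP_{\epsilon}\mu + \cP_{\epsilon}\mu - \cL_{\xi,f}\mu$. The first difference is controlled in $\TotalVar{\cdot}$ by $\norm{\cP_{\epsilon}}_{\measures}\TotalVar{\mu_{k}-\mu}\leq\LL{\infty}{\xi\cdot f'}{\Oo}\TotalVar{\mu_{k}-\mu}$ (using $\LL{\infty}{\xi_{\epsilon}}{\omega_{j}}\leq\LL{\infty}{\xi}{\omega_{j}}$ so $\LL{\infty}{\xi_{\epsilon}f'}{\Oo}\leq\LL{\infty}{\xi f'}{\Oo}$), while the second difference $(\cP_{\epsilon}-\cL_{\xi,f})\mu$ applied to $\eta$ equals $\mu((\xi_{\epsilon}-\xi)\cdot f'\cdot\eta\circ f)$; here I bound $\abs{\xi_{\epsilon}-\xi}\leq C_{\xi}\epsilon^{\alpha}$ from \eqref{eq:smoothxi}, but since $f'$ need not be bounded I cannot simply pull out an $\LL{\infty}$ norm — instead I invoke Lemma~\ref{lem:Lpbound} with the test function $(\xi_{\epsilon}-\xi)f'\cdot\eta\circ f$, or more cleanly estimate $\abs{\mu((\xi_{\epsilon}-\xi)f'\eta\circ f)}\leq C_{\xi}\epsilon^{\alpha}\abs{\mu(f'\cdot\abs{\eta}\circ f)}$ and use that $\abs{\mu}(f')\leq C\LL{p}{f'}{\Oo}\norm{\mu}_{\gbv{\beta}{\Oo}}$ by Lemma~\ref{lem:Lpbound} (with $\beta p>1$ from \eqref{eq:assumption1}), giving a bound of the form $C_{\xi}\epsilon^{\alpha}\LL{p}{f'}{\Oo}\norm{\mu}_{\gbv{\beta}{\Oo}}\abs{\eta}_{\infty}$, hence $\TotalVar{(\cP_{\epsilon}-\cL_{\xi,f})\mu}\leq C\epsilon^{\alpha}\norm{\mu}_{\gbv{\beta}{\Oo}}$.

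Collecting, for each $k$ the quantity $k^{-\beta}\TotalVar{\nu_{k}-\cL_{\xi,f}\mu}+k^{1-\beta}\norm{\nu_{k}}_{\BV(\Oo)}$ is bounded by
\[
k^{-\beta}\LL{\infty}{\xi f'}{\Oo}\TotalVar{\mu_{k}-\mu}+k^{1-\beta}6\LL{\infty}{\xi}{\Oo}\norm{\mu_{k}}_{\BV(\Oo)}+Ck^{-\beta}\epsilon^{\alpha}\norm{\mu}_{\gbv{\beta}{\Oo}}+Ck^{1-\beta}\epsilon^{-(1-\alpha)}\TotalVar{\mu_{k}}.
\]
Recalling $\beta=\alpha$, the natural choice is $\epsilon=\epsilon(k)=k$, which makes the last two terms $Ck^{\beta-\beta}\norm{\mu}_{\gbv{\beta}{\Oo}}=C\norm{\mu}_{\gbv{\beta}{\Oo}}$ — wait, that is not a $\TotalVar{\cdot}$ error. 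Here is the subtlety that I expect to be the main obstacle: the mollification error scales like $\epsilon^{\alpha}$ which with $\epsilon=k$ exactly matches the $k^{\beta}$ weighting and so contributes to the \emph{norm} rather than the lower-order term, whereas we want it in the $\TotVar{\mu}{\Oo}$ term. The resolution is to choose $\epsilon$ slightly smaller, say $\epsilon(k)=k^{1+\delta}$ for small $\delta>0$ won't help either; rather, one should rescale the family, replacing $\nu_{k}$ by $\nu_{ck}$ for a large constant $c$, i.e. reparametrise so that the coefficient of $\norm{\mu}_{\gbv{\beta}{\Oo}}$ becomes $c^{-\beta}$ times a constant, made small, at the cost of enlarging the $\BV$ coefficient by $c^{1-\beta}$ — but that multiplies the good constant $\LL{\infty}{\xi}{\Oo}^{\beta}\LL{\infty}{\xi f'}{\Oo}^{1-\beta}$, which is not allowed. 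The correct fix, and the key idea, is to first use Lemma~\ref{lem:muk} to replace the approximating measures $\mu_{k}$ by ones whose own $\gbv{\beta}{\Oo}$ norm is controlled by $2M$, then handle the error term $(\cP_{\epsilon}-\cL_{\xi,f})\mu_{k}$ using $\TotVar{\cdot}$-boundedness of $\mu_{k}$ in the regime where $\epsilon^{\alpha}k^{-\beta}$ is small and switch to a direct $\TotalVar{\cdot}$ estimate otherwise; balancing $\epsilon^{\alpha}=k^{\beta}$ versus the constraint that forces the clean spectral constant, one takes $\epsilon(k)$ proportional to $k$ but absorbs the resulting $O(\norm{\mu}_{\gbv{\beta}{\Oo}})$ contribution by noting it can instead be written, via Lemma~\ref{lem:Lpbound}, as $C\TotVar{\mu}{\Oo}$ only after a compactness/approximation argument — so in fact the honest route is: prove the inequality with $\norm{\mu}_{\gbv{\beta}{\Oo}}$ in place of $\TotVar{\mu}{\Oo}$ first, observe the constant in front of it can be made smaller than any prescribed $\theta>10\LL{\infty}{\xi}{\Oo}^{\beta}\LL{\infty}{\xi f'}{\Oo}^{1-\beta}$ by choosing the scale, then iterate/interpolate — but here, since the statement as written has an arbitrary $C$ in front of $\TotVar{\mu}{\Oo}$, it suffices to bound the mollification error crudely by $C\TotVar{\mu}{\Oo}$ whenever $\epsilon$ is bounded below, and to take $\epsilon$ bounded below by restricting attention to $k\leq 1$ for the error estimate while using the trivial family $\mu_{k}=0$ (or $\mu_{k}=\mu$) for $k>1$; assembling these two regimes gives exactly the claimed inequality with the clean constant $10\LL{\infty}{\xi}{\Oo}^{\beta}\LL{\infty}{\xi f'}{\Oo}^{1-\beta}$ coming from the arithmetic--geometric mean $k^{-\beta}a+k^{1-\beta}b\geq$ minimised at $k=a/b$ giving $a^{1-\beta}b^{\beta}$ applied with $a=\LL{\infty}{\xi f'}{\Oo}\TotalVar{\mu_{k}-\mu}$, $b=6\LL{\infty}{\xi}{\Oo}\norm{\mu_{k}}_{\BV(\Oo)}$ and $6^{\beta}<10$.
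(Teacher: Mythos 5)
Your overall blueprint matches the paper's: take an approximating family $\{\mu_k\}$ for $\mu$, push it through the smoothed operator $\cP_{\epsilon(k)}$, control the $\BV$-part with Lemma~\ref{lem:estBV} and the error with Lemma~\ref{lem:Lpbound} plus the mollification estimate~\eqref{eq:smoothxi}. But there are two genuine problems.

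First, your decomposition $\nu_k-\cL_{\xi,f}\mu=\cP_\epsilon(\mu_k-\mu)+(\cP_\epsilon-\cL_{\xi,f})\mu$ rests on the claim that $\LL{\infty}{\xi_\epsilon f'}{\Oo}\leq\LL{\infty}{\xi f'}{\Oo}$, which is false in general. Mollification shifts the region where $\xi$ is small away from the region where $f'$ blows up: e.g.\ if $\xi(x)=x^{1/2}$ and $f'(x)=x^{-1/2}$ near $0$, then $\xi f'\equiv 1$ but $\xi_\epsilon(x)f'(x)\sim(\epsilon/x)^{1/2}\to\infty$. So $\cP_\epsilon$ need not be bounded on $\measures$, and $\TotalVar{\cP_\epsilon(\mu_k-\mu)}$ is not controlled by $\TotalVar{\mu_k-\mu}$. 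The paper instead writes $\cL_{\xi,f}\mu-\nu_\ell$ as $(\mu-\mu_k)(\xi f'\eta\circ f)+\mu_k((\xi-\xi_\epsilon)f'\eta\circ f)$: the \emph{unsmoothed} $\xi f'\in\mathbf{L^\infty}$ goes with $\mu-\mu_k$, and the mollification error (bounded in $\mathbf{L^\infty}$) together with $f'\in\mathbf{L^p}$ goes with $\mu_k$, whose $\gbv{\beta}{\Oo}$-norm is $\leq 2M$ by Lemma~\ref{lem:muk}. Your argument would need to be rearranged to this form.

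Second, and more importantly, you never actually resolve the obstacle you correctly identify. The resolution is simple once seen and requires none of the ``rescale / iterate / compactness'' tangents you propose: take $\epsilon(\ell)=\epsilon_0\ell$ with a \emph{small fixed} constant $\epsilon_0$ (and $k(\ell)=\LL{\infty}{\xi}{\Oo}\LL{\infty}{\xi f'}{\Oo}^{-1}\ell$). Since $\beta=\alpha$ and both scales are linear in $\ell$, the mollification error contributes $\ell^{-\beta}\cdot C_\xi(\epsilon_0\ell)^\alpha\cdot\LL{p}{f'}{\Oo}\cdot 2M=2C_\xi\epsilon_0^\alpha\LL{p}{f'}{\Oo}M$, a \emph{constant multiple of $M$} with factor $O(\epsilon_0^\alpha)$. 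This does not need to be moved into the $\TotVar{\mu}{\Oo}$ slot; it is absorbed into the deliberately generous coefficient $10\LL{\infty}{\xi}{\Oo}^\beta\LL{\infty}{\xi f'}{\Oo}^{1-\beta}$ in front of $\norm{\mu}_{\gbv{\beta}{\Oo}}$ by choosing $\epsilon_0$ small (depending only on $\xi$, $f$). The lower-order $C\TotVar{\mu}{\Oo}$ comes from an entirely different source: the $\epsilon^{-(1-\alpha)}\TotalVar{\mu_{k(\ell)}}$ term in Lemma~\ref{lem:estBV} (after writing $\TotalVar{\mu_{k}}\leq k^\beta M+\TotalVar{\mu}$ and using that $\ell<\ell_0$) together with the trivial bound on $\ell\geq\ell_0$ where $\nu_\ell:=0$. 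Your proposal gets the ingredients right but misidentifies where the mollification error lands, and the closing paragraph does not assemble into a valid proof.
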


\begin{proof}
Since $\mu\in { \gbv{\beta}{\Oo}}$ for each $M>\norm{\mu}_{\gbv{\beta}{\Oo}}$ there exists 
$\{\mu_{k}\}_{k>0} $ such that,  for all $k>0$ 
\begin{equation}\label{eq:M}
k^{-\beta} \TotVar{\mu - \mu_{k}}{\Oo} +  k^{1-\beta} \bvnorm{\mu_{k}}
\leq M.
\end{equation}
We fix some $\ell_{0}>0$ and $\epsilon_{0}>0$ (which will be chosen below). Let $k(\ell):=    \LL{\infty}{\xi}{\Oo} \LL{\infty}{\xi \cdot f'}{\Oo}^{-1}   \ell $ and let $\epsilon(\ell):= \epsilon_{0} \ell $ for all $\ell>0$.   Now let
 \[
 \nu_{\ell}:= \begin{cases}
 \cP_{\epsilon(\ell)}\mu_{k(\ell)}    &  \text{ if $\ell \in (0, \ell_{0})$}\\
 0 & \text{ if $\ell \geq \ell_{0}$}.
 \end{cases}
 \]
 For $\ell\geq \ell_{0}$ we have immediately that 
 \begin{equation}\label{eq:yeah} 
 \begin{split}
 \ell^{-\beta} \TotalVar{\cL_{\xi,f}\mu - \nu_{\ell}} + \ell^{1-\beta} \norm{\nu_{\ell}}_{\BV({\Oo})} 
 & = \ell^{-\beta}    \TotalVar{\cL_{\xi,f}\mu}    \\
 & \leq \ell_{0}^{-\beta}   
  \LL{\infty}{\xi \cdot f'}{\Oo}  \TotalVar{\mu}.
  \end{split}
 \end{equation} 
  Now we consider  $\ell \in (0, \ell_{0})$.
  First we  estimate $\ell^{-\beta} \TotalVar{\cL_{\xi,f}\mu - \nu_{\ell}} $.  Note that
  \[
  \begin{split}
  \left( \cL_{\xi,f}\mu - \nu_{\ell}\right) (\eta) &= \mu(\xi \cdot \map' \cdot {\eta \circ \map}) - \mu_{k}(\xi_{\epsilon}\cdot \map'\cdot \eta\circ \map)\\
  &= (\mu -\mu_{k})(\xi\cdot\map'\cdot\eta\circ\map) + \mu_{k}([\xi-\xi_{\epsilon}]\cdot \map'\cdot\eta\circ\map).
  \end{split}
  \]
  Using the above calculation and Lemma~\ref{lem:Lpbound} we have that for all $\ell \in (0,\ell_{0})$
 \begin{equation*}
 \begin{split}
\TotalVar{\cL_{\xi,f}\mu - \nu_{\ell}} 
  &\leq    \LL{\infty}{\xi \cdot f'}{\Oo}  \TotalVar{\mu - \mu_{k(\ell)}} \\
& \ \ \   + C \LL{\infty}{\xi-\xi_{\epsilon(\ell)}}{\Oo} \LL{p}{\map'}{\Oo} \norm{\mu_{k(\ell)}}_{\gbv{\beta}{\Oo}}.
 \end{split}
 \end{equation*} 
 By definition of $k(\ell)$ we have $\ell^{-\beta} = k(\ell)^{-\beta}    \LL{\infty}{\xi}{\Oo}^{\beta}  \LL{\infty}{\xi \cdot f'}{\Oo}^{-\beta}  $. We also recall Lemma~\ref{lem:muk}, the  definition of $\epsilon(\ell)$ and \eqref{eq:smoothxi}.
 We have
 \begin{equation}\label{eq:wow}
 \begin{split}
 \ell^{-\beta} \TotalVar{\cL_{\xi,f}\mu - \nu_{\ell}} 
  &\leq  \LL{\infty}{\xi }{\Oo}^{\beta}   \LL{\infty}{\xi \cdot f'}{\Oo}^{1-\beta} M \left( 1+ 2C_{\xi} \epsilon_{0}^{\alpha} \LL{p}{\map'}{\Oo}  \right)\\
  &\leq  2 C \LL{\infty}{\xi }{\Oo}^{\beta}   \LL{\infty}{\xi \cdot f'}{\Oo}^{1-\beta} M,
 \end{split}
 \end{equation} 
 where we now choosen $\epsilon_{0}>0$ dependent only on $\xi$ and $\map$ such that the last line of the above holds.

Now we  estimate $\ell^{1-\beta}\norm{\nu_{\ell}}_{\BV({\Oo})}$. Using the estimate of Lemma~\ref{lem:estBV} 
 we have (increasing $C<\infty$ if required) that
 \[
\norm{ \nu_{\ell} }_{\BV(\Oo)} \leq 4 \LL{\infty}{\xi}{\Oo} \norm{\mu_{k(\ell)}}_{\BV(\Oo)}  +  C \epsilon(\ell)^{-(1-\alpha)} \TotVar{\mu_{k(\ell)}}{\Oo}.
 \]
 For the first term note that
$ \ell^{1-\beta} \LL{\infty}{\xi}{\Oo}  =  \LL{\infty}{\xi}{\Oo}^{\beta}  \LL{\infty}{\xi \cdot f'}{\Oo}^{1-\beta}    k(\ell)^{1-\beta} $  and so   $\ell^{1-\beta} \LL{\infty}{\xi}{\Oo} \norm{\smash{\mu_{k(\ell)}}}_{\BV(\Oo)} \leq     \LL{\infty}{\xi}{\Oo}^{\beta}  \LL{\infty}{\xi \cdot f'}{\Oo}^{1-\beta}    M$. 
 Concerning the second term note that $\TotalVar{\mu_{k}} \leq \TotalVar{\mu_{k}-\mu} + \TotalVar{\mu}$ and so $\TotalVar{\mu_{k}} \leq k^{\beta} M + \TotalVar{\mu}$ and hence  \[\begin{split}
 C \epsilon(\ell)^{-(1-\alpha)}\ell^{1-\beta}\TotalVar{\mu_{k(\ell)}} 
 & \leq C \epsilon_{0}^{-(1-\alpha)}\TotalVar{\mu_{k(\ell)}}\\
 &\leq     C \epsilon_{0}^{-(1-\alpha)} \left(  k(\ell)^{\beta}   M + \TotalVar{\mu}\right).
 \end{split}
 \]
 We now choose, as promised above, $\ell_{0}$ sufficiently small so that
  that
  \begin{equation}\label{eq:oh}
\ell^{1-\beta} \norm{ \nu_{\ell} }_{\BV(\Oo)} \leq 5 \LL{\infty}{\xi}{\Oo}^{\beta}  \LL{\infty}{\xi \cdot f'}{\Oo}^{1-\beta}   M
 + \tilde C \TotVar{\mu}{\Oo},
 \end{equation} for all $\ell \in (0,\ell_{0})$ for some $\tilde C<\infty$.
 By the estimates of \eqref{eq:yeah}, \eqref{eq:wow} and \eqref{eq:oh} we have shown that there exists $ C<\infty$ (dependent only on $f$, $\xi$ and $(p-\frac{1}{\beta})$) such that 
\[
 \ell^{-\beta} \TotalVar{\cL_{\xi,f} - \nu_{\ell}} + \ell^{1-\beta} \norm{\nu_{\ell}}_{\BV({\Oo})}
 \leq 10  \LL{\infty}{\xi}{\Oo}^{\beta}  \LL{\infty}{\xi \cdot f'}{\Oo}^{1-\beta}   M
 +  C  \TotVar{\mu}{\Oo},
\]
  for all $\ell >0$. That this holds for all $M>\norm{\mu}_{\gbv{\beta}{\Oo}}$ completes the proof.
\end{proof}

\begin{lemma}\label{lem:LY2}
For each $n\in \bN$ there exists\footnote{By iterating this estimate one could easily remove the dependence of $C_{n}$ on $n$ but this serves no benefit is the present argument.} $C_{n}<\infty$ such that, for all $\mu \in \gbv{\beta}{\Oo}$,  
\[
\norm{\cL_{\xi,f}^{n}\mu}_{\gbv{\beta}{\Oo}} \leq  10 \LL{\infty}{\xi^{(n)}}{\Oo}^{\beta} \LL{\infty}{\xi^{(n)} \cdot (f^{n})'}{\Oo}^{1-\beta} \norm{\mu}_{\gbv{\beta}{\Oo}} 
+ C_{n} \TotVar{\mu}{\Oo}.
\]
\end{lemma}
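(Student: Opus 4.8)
The plan is to reduce the statement to Lemma~\ref{lem:LY}, applied with the pair $(\xi^{(n)},\map^{n})$ in place of $(\xi,\map)$, once one observes that the $n$-th power of the transfer operator is itself a transfer operator of exactly the same shape.

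First I would record the semigroup identity $\cL_{\xi,\map}^{n} = \cL_{\xi^{(n)},\map^{n}}$, valid for every $n\in\bN$, proved by induction on $n$. The case $n=1$ is the definition. Assuming the identity for $n-1$, then for $\mu\in\measures$ and $\eta\in\cC(\Oo)$,
\[
\begin{split}
\cL_{\xi,\map}^{n}\mu(\eta)
&= \cL_{\xi^{(n-1)},\map^{n-1}}\mu\bigl(\xi\cdot\map'\cdot\eta\circ\map\bigr)\\
&= \mu\bigl(\xi^{(n-1)}\cdot(\map^{n-1})'\cdot(\xi\circ\map^{n-1})\cdot(\map'\circ\map^{n-1})\cdot\eta\circ\map^{n}\bigr)\\
&= \mu\bigl(\xi^{(n)}\cdot(\map^{n})'\cdot\eta\circ\map^{n}\bigr) = \cL_{\xi^{(n)},\map^{n}}\mu(\eta),
\end{split}
\]
where I used $\xi^{(n)}=\xi^{(n-1)}\cdot(\xi\circ\map^{n-1})$, the chain rule $(\map^{n})'=(\map^{n-1})'\cdot(\map'\circ\map^{n-1})$, and $\eta\circ\map\circ\map^{n-1}=\eta\circ\map^{n}$.

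Next I would check that $(\xi^{(n)},\map^{n})$ again falls within the class of Section~\ref{sec:proof}, with the \emph{same} H\"older exponent $\al$ and the same $p$. Put $\cZ_{n}:=\bigcup_{i=0}^{n-1}\map^{-i}\cZ$; this is closed, of zero Lebesgue measure, and $\Oo\setminus\cZ_{n}$ is a countable union of open intervals, on which $\map^{n}\in\cC^{1}$ and $(\map^{n})'\geq(\inf\map')^{n}>1$. The bound $\LL{\infty}{\xi^{(n)}\cdot(\map^{n})'}{\Oo}<\infty$ is exactly the content of \eqref{eq:assumption3} for the index $n$, while the remaining conditions in \eqref{eq:assumption2} and \eqref{eq:assumption1}---uniform $\al$-H\"older continuity of $\xi^{(n)}$ on the components of $\Oo\setminus\cZ_{n}$, summability of the $\LL{\infty}{\cdot}{\cdot}$ norms over these components, and $\LL{p}{(\map^{n})'}{\Oo}<\infty$---are inherited from $(\xi,\map)$, at the cost of $n$-dependent constants, using multiplicativity together with $\inf\map'>1$. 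Granting this, Lemma~\ref{lem:LY} applied to $(\xi^{(n)},\map^{n})$ furnishes a constant $C_{n}<\infty$ with
\[
\norm{\cL_{\xi^{(n)},\map^{n}}\mu}_{\gbv{\beta}{\Oo}}\leq 10\,\LL{\infty}{\xi^{(n)}}{\Oo}^{\beta}\,\LL{\infty}{\xi^{(n)}\cdot(\map^{n})'}{\Oo}^{1-\beta}\norm{\mu}_{\gbv{\beta}{\Oo}}+C_{n}\TotVar{\mu}{\Oo},
\]
and substituting the identity of the previous paragraph yields precisely the claim.

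I expect the main obstacle to be exactly the inheritance claimed in the second step---above all the uniform $\al$-H\"older bound for $\xi^{(n)}$, since $\map'$ is not assumed bounded, so that a naive "composition of H\"older with $C^{1}$" estimate is not available and one must argue through the (contracting) inverse branches of $\map^{n}$. An alternative that avoids this bookkeeping is to copy the proof of Lemma~\ref{lem:LY} directly, with $\map^{n}$ replacing $\map$ and with the smoothed weight taken to be $\prod_{i=0}^{n-1}\xi_{\epsilon}\circ\map^{i}$: the estimates of \eqref{eq:smoothxi} are then replaced by their telescoped counterparts, carrying $n$-dependent constants, and the arguments of Lemmas~\ref{lem:estBV} and~\ref{lem:LY} proceed otherwise unchanged, with the role of the exponent $\al$ preserved throughout.
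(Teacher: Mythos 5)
Your strategy---record the semigroup identity $\cL_{\xi,\map}^{n}=\cL_{\xi^{(n)},\map^{n}}$ and apply Lemma~\ref{lem:LY} to the pair $(\xi^{(n)},\map^{n})$---is exactly the paper's. The paper's own proof is two lines: it states the identity and then asserts that ``the assumptions continue to hold since the map $\map$ is expanding,'' deferring entirely to Lemma~\ref{lem:LY}. So the two proofs coincide in route; your version fills in the semigroup computation, which the paper takes for granted.

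Where you differ from the paper is that you explicitly flag the verification that $(\xi^{(n)},\map^{n})$ still satisfies \eqref{eq:assumption2} and \eqref{eq:assumption1}, and your worry is genuinely well founded---this is the one place where the paper's proof is not self-evidently complete. When $\map'$ is unbounded (the very situation this paper is designed to handle), neither claim inherits by a one-line remark. For the $\al$-H\"older bound on $\xi^{(n)}$ over the components of $\Oo\setminus\bigcup_{i<n}\map^{-i}\cZ$, telescoping gives $\lvert \xi^{(n)}(x)-\xi^{(n)}(y)\rvert\lesssim\sum_{i<n}\lvert\map^{i}x-\map^{i}y\rvert^{\al}$ and $\lvert\map^{i}x-\map^{i}y\rvert=\lvert(\map^{i})'(z)\rvert\,\lvert x-y\rvert$ with no a priori upper bound on $(\map^{i})'$; expansion only gives the wrong (lower) bound, and going through inverse branches relates $\map^{i}x-\map^{i}y$ to $\map^{n}x-\map^{n}y$, not to $x-y$. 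Likewise $\LL{p}{(\map^{n})'}{\Oo}<\infty$ does not follow from $\LL{p}{\map'}{\Oo}<\infty$ without further input, since $\int\lvert\map'\circ\map^{i}\rvert^{p}\,d\m$ is an integral against $(\map^{i})_{*}\m$, which the hypotheses do not control. However, your proposed fallback does not repair this: if the smoothed weight is $\prod_{i<n}\xi_{\epsilon}\circ\map^{i}$, then its derivative is $\sum_{i}(\map^{i})'\cdot(\xi_{\epsilon}'\circ\map^{i})\cdot\prod_{j\neq i}\xi_{\epsilon}\circ\map^{j}$, and the factor $(\map^{i})'$ now sits next to $\xi_{\epsilon}'$ rather than next to $\xi$, so the cancellation $\LL{\infty}{\xi\cdot\map'}{\Oo}<\infty$ is not available and the second estimate of \eqref{eq:smoothxi} is lost for the $n$-fold weight. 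In short: your proposal reproduces the paper's argument and correctly isolates the step that carries the real content, but neither the paper's assertion nor your suggested alternative actually closes that step when $\map'$ is unbounded; a genuine proof must exploit the joint bound on $\xi\cdot\map'$ (or strengthen the hypotheses) to control $\xi^{(n)}$ and $(\map^{n})'$ simultaneously.
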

\begin{proof}
The weighted transfer operator $\cL_{\xi,\map}$ is the one associated  to the map $f$ and weight $\xi$.  We now wish to consider $\cL_{\xi,\map}^{n}$. This however is equal to the transfer operator $\cL_{\tilde \xi, \tilde \map}$, the one associated  to the map $\smash{\tilde f}:= f^{n}$ and weight $\smash{\tilde\xi}:= \xi^{(n)}$. Assumptions \eqref{eq:assumption2} and \eqref{eq:assumption2} continue to hold since the map $\map$ is expanding.
This means that this lemma is a direct consequence of  Lemma~\ref{lem:LY2}.
\end{proof}

We are now in a position to complete the proof of the Main Theorem 
by using the above estimate. 
That $\cL_{\xi,f}: \gbv{\beta}{\Oo} \to \gbv{\beta}{\Oo}$  is continuous is immediate from Lemma~\ref{lem:LY2}. 
 To bound  the essential spectral radius we follow Hennion's argument \cite{He}.
Let 
 \[
 B_{n}:= \{\cL_{\xi,f}^{n}\mu: \mu\in \gbv{\beta}{\Oo}, \norm{\mu}_{\gbv{\beta}{\Oo}}\leq 1\}
 \]
 and let $r_{n}$ denote the infimum of the $r$ such that the set $
B_{n}$
may be covered by a finite number of balls of radius $r$ (measured in the $\norm{\cdot}_{\gbv{\beta}{\Oo}}$ norm). The formula of Nussbaum \cite{Nussbaum} states that 
  \begin{equation}\label{eq:nussbaum}
r_{ess}(\cL_{\xi,f})=\liminf_{n\to \infty}  \sqrt[n]{r_{n}}.
\end{equation}
By  Lemma~\ref{lem:compact}, we know that
 that $B_{0}$ is relatively compact in the $\TotalVar{\cdot}$ norm and
therefore, for each $\epsilon >0$, there exists a finite set 
$\{G_{i}\}_{i=1}^{N_{\epsilon}}$ of  subsets of $B_{0}$ whose union covers $B_{0}$ 
and such that
\begin{equation}\label{compact1}
\TotalVar{\smash{\mu-\tilde \mu}} \leq \epsilon \quad \text{  for all $\mu,\tilde \mu \in G_{i}$}.
\end{equation} Notice that \( r_{n} \) can be bounded above by the supremum of the diameters of the elements of any given cover of \( B_{n} \). 
Since the union of \( \{G_{i}\}_{i=1}^{N_{\epsilon}} \) is a cover of \( B_{0} \),  then \( \{\cL_{\xi,f}^{n}(G_{i})\}_{i=1}^{N_{\epsilon}} \) is a cover of \( B_{n} \) and therefore it is  sufficient to obtain an upper bound for the maximum diameter of the \( \cL_{\xi,f}^{n}(G_{i}) \). We use 
 the estimate on $\norm{\smash{\cL_{\xi,f}^{n}\mu}}_{\gbv{\beta}{\Oo}}$ from  Lemma~\ref{lem:LY2} and for convenience let 
 \[
 \lambda(n):=     10 \LL{\infty}{\xi^{(n)}}{\Oo}^{\beta} \LL{\infty}{\xi^{(n)}\cdot (f^{n})'}{\Oo}^{1-\beta}.
 \]
We therefore have  that for all $\mu,\tilde \mu \in G_{i}$ and $n\in \bN$ then
 \begin{equation*}
\norm{ \smash{ \cL_{\xi,f}^{n}\mu- \cL_{\xi,f}^{n}\tilde \mu}}_{\gbv{\beta}{\Oo}} \leq   \lambda({n}) \norm{\smash{\mu-\tilde \mu}}_{\gbv{\beta}{\Oo}} + C_{n} \TotalVar{\smash{\mu-\tilde \mu}} . 
\end{equation*}
Substituting \eqref{compact1} we have shown that  
$r_{n}\leq   \lambda({n})  + C_{n} \epsilon$.
We  choose  $\epsilon =\epsilon(n)$ small enough so that $r_{n} \leq 2 \lambda({n})$ and so \eqref{eq:nussbaum} implies the desired estimate on the essential spectral radius.
Now we know the estimate on the essential spectral radius the estimate $\TotalVar{\cL_{\xi,\map}} \leq \LL{\infty}{\xi \cdot f'}{\Oo}$ implies that the spectral radius is not greater than $\lambda_{2}$.

\end{document}